\newcommand{\revised}[1]{{#1}}
\theoremstyle{plain}
\newtheorem{theorem}{Theorem}
\newtheorem{corollary}{Corollary}
\newtheorem{lemma}{Lemma}
\newtheorem{proposition}{Proposition}
\theoremstyle{definition}
\newtheorem{definition}{Definition}
\theoremstyle{remark}
\newtheorem{remark}{Remark}
\newcommand{\bfr}{\mathbf{R}}
\newcommand{\bbq}{\mathbb{Q}}
\newcommand{\calM}{{\cal M}}
\newcommand{\ep}{\varepsilon}
\newcommand{\E}{{{\bf E}}}
\newcommand{\rmd}{{\rm{d}}}
\newcommand{\calB}{{\cal B}}
\newcommand{\calb}{{\cal B}}
\newcommand{\calF}{\mathcal{F}}
\newcommand{\calf}{\mathcal{F}}
\newcommand{\one}{\textbf{1}}
\renewcommand{\phi }{\varphi }
\newcommand{\ignore}[1]{}
\begin{document}
\title{Equivalence between Random Stopping Times in Continuous Time%
\thanks{This project grew from a question posed to one of the authors by Yuri Kifer.
Solan acknowledges the support of the Israel Science Foundation, Grants \#212/09 and \#323/13,
and of the Google Inter-university center for Electronic Markets and
Auctions.}}
\author{Eran Shmaya\thanks{Kellogg School of Management, 2001 Sheridan Road, Evanston, IL,
and the School of Mathematical Sciences, Tel Aviv
University, Tel Aviv 69978, Israel. e-mail: e-shmaya@kellogg.northwestern.edu.}\ and
Eilon Solan\thanks{The School of Mathematical Sciences, Tel Aviv
University, Tel Aviv 69978, Israel. e-mail: eilons@post.tau.ac.il.}}
\maketitle

\begin{abstract}
Two concepts of random stopping times in continuous time have been defined in the literature,
mixed stopping times and randomized stopping times.
We show that under weak conditions these two concepts are equivalent, and, in fact, that
all types of random stopping times are equivalent.
We exhibit the significance of the equivalence relation between stopping times using stopping problems and stopping games.
As a by-product we extend Kuhn's Theorem to stopping games in continuous time.
\end{abstract}

\noindent\textbf{Keywords:} Random stopping times, continuous time,
equivalence, stopping games, Kuhn's theorem.

\newpage

\section{Introduction}

\revised{
In optimal stopping problems, which have been widely studied in the literature,
a stochastic process is given and the decision maker
has to choose a stopping time to maximize the expectation of
the stopped process.

In some problems in discrete time the optimal stopping time turns out to be randomized;
that is, the decision maker uses a randomization device, which is external to the problem, that dictates when to stop.
Two examples are optimal multivariate stopping problems in which the goal is to maximize
a function of the expectation of the stopped process (Assaf and Samuel-Cahn, 1998)
and the realization of the upper hedging price of an American option in the presence of proportional transaction costs
(Chalasani and Jha, 2001).
Random stopping times have also been introduced for coupling Markov chains, see, e.g., Pitman (1976),
and they arise naturally in game theory, when several decision makers control the stopping time
(see, e.g., Kuhn, 1957).

In continuous time, One definition of random stopping times is that of \emph{mixed} stopping times
(Aumann (1964), Baxter and Chacon (1977)).
Roughly, a mixed stopping time is a probability distribution over stopping times.
Baxter and Chacon (1977) proved that the convex hull of the set of stopping times is the set of mixed stopping times,
and Dalang (1988) and Nualart (1992) proved that
the set of stopping times is the set of extreme points of the set of mixed stopping times.

Another definition of random stopping times is that of \emph{randomized} stopping time
(Touzi and Vieille, 2002).
A randomized stopping time is a non-decreasing $[0,1]$-valued stochastic process that measures,
for each time $t$, the probability that the process is stopped before or at time $t$.

The choice of which definition of random stopping time to use is mainly technical.
For example, Touzi and Vieille (2002) used randomized stopping times because,
after some reductions, this allowed them to use a fixed-point theorem.

A natural question that arises is whether the two types of random stopping times are equivalent.
This question was first asked in the game theory literature,
where it was answered positively for finite games in discrete time
(Kuhn, 1957; see Mertens, Sorin, and Zamir (1994, Chapter II.1.c) for a generalization to infinite games
and Aumann (1964) for a related result).
Recently Tsirelson, Solan, and Vieille (2013) proved that the two concepts are equivalent for discrete-time stopping times.

In this paper we extend Kuhn's Theorem for continuous-time stopping
times, and prove that every mixed stopping time is equivalent to a
unique randomized stopping time, and every randomized stopping time
is equivalent to some (possibly more than one) mixed stopping time.
To this end we introduce the concept of \emph{distribution stopping
time}, that is more general than both types of stopping times
described above, and prove that every distribution stopping time is
equivalent to a unique randomized stopping time and to some mixed
stopping time. Note that in the continuous-time setup it is not
known how to formulate Kuhn's Theorem for other types of games. This
is because the very notion of a strategy in such games, as well as the notion of the play path
generated by a strategy profile, is problematic.

The paper is organized as follows.
In Section \ref{section:definitions} we define the three concepts of random stopping times we are interested in.
In Section \ref{section:equivalent} we define the concept of equivalence between stopping times,
and state and prove the equivalence results.
The motivation for the particular definition of equivalence that we chose is exhibited in the last two sections.
In Section \ref{section:problems} we show that two equivalent random stopping times induce the same payoff in every stopping problem,
and in Section \ref{section:games} we show that two equivalent random stopping times induce the same payoff in every stopping game,
when the strategies of the other players remain fixed.
}

\section{Pure and Random Stopping Times}
\label{section:definitions}

In this section we define pure stopping times in a continuous-time framework and several concepts of random stopping times.
Throughout the paper we equip the unit interval $I = [0,1]$ with the $\sigma$-algebra $\calB$ of Borel sets and the Lebesgue measure $\lambda$.
A stochastic process on a probability space $(\Omega,\calF,P)$ is given by a jointly measurable%
\footnote{Nothing in what is said below will change if the time interval is $[0,\infty)$ rather than $[0,T]$.}
function $x:\Omega\times [0,T]\rightarrow \mathbf{R}$.
When we say that a set is measurable w.r.t.~some $\sigma$-algebra
we always mean the completion of this $\sigma$-algebra w.r.t.\ the underlying measure.
We use the notations $x(\omega,t), x_\omega(t), x_t(\omega)$ interchangeably.
\subsection{On Stopping Problem Forms}
\begin{definition}
A \emph{stopping problem form} (in continuous time) $\Gamma$ is
given by a probability space $(\Omega,\calF,P)$ and a filtration
 in continuous time $(\calF_t)_{t \in [0,T]}$.
\end{definition}

When studying stopping problems in continuous time one usually makes the following technical assumptions.
\begin{definition}
The filtration $(\calF_t)_{t \in [0,T]}$ \emph{satisfies the usual
conditions} if
\begin{enumerate}
\item $\calF = \vee_{t \in [0,T]} \calF_t$, \item $\calF_0$
contains all $P$-null sets, and \item the filtration is
\emph{right continuous}: $\calF_t = \cap_{s > t} \calF_s$, for every $t
\in [0,T]$.
\end{enumerate}
\end{definition}

From now on we fix a stopping problem form $(\Omega,\calF,P,(\calF_t)_{t \in [0,T]})$
in which the filtration satisfies the usual conditions.
Recall that a \emph{stochastic process} $(x_t)_{t\in [0,T]}$ is \emph{adapted}
if $x_t$ is $\calf_t$-measurable for every $t \in [0,T]$.

A stopping time is a $[0,T]$-valued function that is measurable with respect to the filtration.
To emphasize the difference between this concept and the three concepts of random stopping times that we will define in the sequel,
we call the former a \emph{pure stopping time}.
\begin{definition}
A \emph{pure stopping time} is a function $\sigma : \Omega \to [0,T]$
that satisfies $\{\omega \in \Omega \colon \sigma(\omega) \le t\} \in \calF_t$ for every $t \in [0,T]$.
\end{definition}
Because the filtration is right continuous, in the definition of stopping time it is sufficient to require that
$\{\omega \in \Omega \colon \sigma(\omega) < t\} \in \calF_t$ for every $t \in [0,T]$.

\subsection{On Random Stopping Times}

In this section we present two types of random stopping times that were defined in the literature.
The first type is that of mixed stopping times \`a la Aumann (1964),
which was used in Laraki and Solan (2005, 2013).
\begin{definition}
\label{de:mixed:a}
A \emph{mixed stopping time} is a measurable function $\mu : \Omega \times I \to [0,T]$
such that for $\lambda$-almost every $r \in I$, the function $\omega \mapsto \mu(\omega,r)$ is a pure stopping time.
\end{definition}

The interpretation of a mixed stopping time is that the player randomly chooses a pure stopping time according to which he stops.
An equivalent definition for the same concept, which is somewhat more natural
and used in Baxter and Chacon (1977), is the following.

\begin{definition}\label{de:mixed:b}
A \emph{mixed stopping time} is a function $\mu : \Omega \times I \to [0,T]$ such that
$\{(\omega,r) \in \Omega\times I \colon \mu(\omega,r) \le t\}$ is $(\calF_t\otimes \calB)$-measurable for every $t \in [0,T]$.
\end{definition}

\begin{lemma}
Definitions \ref{de:mixed:a} and \ref{de:mixed:b} are equivalent.
\end{lemma}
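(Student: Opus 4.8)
The plan is to show the two implications separately, exploiting the fact that the key difference between the definitions is a joint-measurability condition on $\Omega \times I$ versus a fiberwise stopping-time condition for a.e.\ $r$.

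First I would prove that Definition~\ref{de:mixed:b} implies Definition~\ref{de:mixed:a}. Suppose $\mu$ satisfies Definition~\ref{de:mixed:b}, so that $A_t := \{(\omega,r) : \mu(\omega,r)\le t\}$ is $(\calF_t\otimes\calB)$-measurable for every $t$. Joint measurability of $\mu$ is immediate (take $t$ ranging over a countable dense set together with right continuity, or note $\mu$ is the pointwise infimum over rationals of $t\cdot\1_{A_t^c}+T\cdot\1_{A_t}$-type expressions). For the fiberwise claim, fix $t$ and consider the section $(A_t)_r = \{\omega : \mu(\omega,r)\le t\}$. Here I would use a standard section-measurability argument: since $A_t \in \calF_t\otimes\calB$, for $\lambda$-almost every $r$ the section $(A_t)_r$ lies in the completion of $\calF_t$ --- this requires a little care because the product $\sigma$-algebra of two completed $\sigma$-algebras is not automatically complete, so I would work first with a measurable rectangle-generated version of $A_t$, modify it on a product-null set, and observe that the null set can be chosen to have $\lambda$-null $r$-sections. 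Running this over all $t$ in a countable dense subset $D \subseteq [0,T]$ simultaneously, there is a single $\lambda$-conull set of $r$ for which $(A_t)_r \in \calF_t$ for all $t \in D$; right continuity of the filtration then upgrades this to all $t \in [0,T]$, exactly as in the remark following the definition of pure stopping time. Hence $\omega\mapsto\mu(\omega,r)$ is a pure stopping time for a.e.\ $r$.

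Conversely, assume $\mu$ satisfies Definition~\ref{de:mixed:a}: it is jointly measurable and for a.e.\ $r$ the map $\sigma_r := \mu(\cdot,r)$ is a pure stopping time. I want $A_t = \{(\omega,r):\mu(\omega,r)\le t\}\in\calF_t\otimes\calB$. By joint measurability $A_t$ is in $\calF\otimes\calB$; the issue is to get it into the smaller $\sigma$-algebra $\calF_t\otimes\calB$ (up to completion). The idea is to use a conditional-expectation / disintegration argument: since $\1_{A_t}$ is $\calF\otimes\calB$-measurable and bounded, I can take $g_t := \E[\1_{A_t}\mid \calF_t\otimes\calB]$, which is $(\calF_t\otimes\calB)$-measurable. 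For a.e.\ fixed $r$, because $\sigma_r$ is a pure stopping time, $\1_{(A_t)_r} = \1_{\{\sigma_r\le t\}}$ is already $\calF_t$-measurable, so conditioning the $r$-section changes nothing; since the conditional expectation w.r.t.\ the product $\sigma$-algebra acts fiberwise (the $I$-coordinate is ``already measurable''), one gets $g_t = \1_{A_t}$ almost everywhere on $\Omega\times I$. Therefore $A_t$ differs from a $(\calF_t\otimes\calB)$-measurable set by a $(P\otimes\lambda)$-null set, i.e.\ $A_t$ is $(\calF_t\otimes\calB)$-measurable in the completed sense, which is what the paper's convention on measurability requires.

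The main obstacle I anticipate is purely measure-theoretic bookkeeping around completions and product $\sigma$-algebras: the statement that ``a.e.\ section of a product-measurable set is measurable in the completion of the fiber $\sigma$-algebra'' and the dual statement ``a set with a.e.\ fiberwise-$\calF_t$-measurable sections is $(\calF_t\otimes\calB)$-measurable up to null sets'' are both true but need the Fubini-type theorem for completed measures (or an explicit monotone-class reduction to measurable rectangles) rather than the naive product-measure Fubini. The filtration's right continuity is what lets me pass from a countable dense set of times to all times with a single exceptional $\lambda$-null set of $r$'s, and this should be invoked explicitly. Once these technical points are handled, both implications are short.
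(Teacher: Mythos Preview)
Your plan is correct and, for the direction Definition~\ref{de:mixed:b} $\Rightarrow$ Definition~\ref{de:mixed:a}, it coincides with the paper's proof: Fubini gives $\calF_q$-measurable $r$-sections of $\{\mu\le q\}$ for a.e.\ $r$, one intersects over a countable dense set of $q$'s to get a single $\lambda$-conull set of good $r$'s, and right continuity of the filtration upgrades this to all $t$.

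For the direction Definition~\ref{de:mixed:a} $\Rightarrow$ Definition~\ref{de:mixed:b} your route is genuinely different. The paper does not argue directly: it invokes an external result (Solan, Tsirelson, and Vieille, 2013, Proposition~1) asserting that a jointly measurable set whose $r$-sections are $\calF_t$-measurable for a.e.\ $r$ is $(\calF_t\otimes\calB)$-measurable. Your conditional-expectation argument is essentially a self-contained proof of that proposition: take $g_t=\E[\mathbf{1}_{A_t}\mid\calF_t\otimes\calB]$, use that conditioning with respect to $\calF_t\otimes\calB$ acts fiberwise in $r$, and conclude $g_t=\mathbf{1}_{A_t}$ a.e.\ since each section is already $\calF_t$-measurable. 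The only point to be careful about is precisely the ``acts fiberwise'' claim, which is not automatic when $\calF_t$ is not countably generated; it does hold, but needs a monotone-class argument starting from measurable rectangles. You correctly identify this as the main technical obstacle. What you gain over the paper is self-containment; what the paper gains is brevity by outsourcing the measure-theoretic work.
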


\begin{proof}
Assume first that $\mu$ is a mixed stopping time according to Definition \ref{de:mixed:a},
and fix $t \in [0,T]$.
Since the set $\{(\omega,r) \in \Omega\times I \colon \mu(\omega,r) \le t\}$
is measurable and its $r$-sections are $\calF_t$-measurable \revised{for $\lambda$-almost every $r$},
it follows that the set is $(\calF_t \otimes \calB)$-measurable (see Solan, Tsirelson, and Vieille, 2013, Proposition 1),
so that $\mu$ satisfies Definition~\ref{de:mixed:b}.
For the other direction, assume $\mu$ is a mixed stopping time according to Definition~\ref{de:mixed:b}.
Then for every $q\in \bbq\cap [0,T]$ it holds that $B_q=\{(\omega,r) \in \Omega\times I \colon \mu(\omega,r) \le q\}$
is measurable, and therefore by Fubini's Theorem its $r$-sections are $\calF_q$-measurable for $\lambda$-almost every $r$.
Consider the set of $r$-s of $\lambda$-measure $1$ for which these sections are $\calF_q$-measurable for every $q$.
Then for these $r$-s we have
$\{\omega \in \Omega \colon \mu(\omega,r) < t\} = \bigcup_{q < t}\{\omega \in \Omega \colon \mu(\omega,r) < q\}\in \calF_t$
for every $t \in [0,T]$, and therefore, by right continuity of the filtration, $\mu(\cdot,r)$ is a pure stopping time.
\end{proof}

A second type of random stopping times is that of \emph{randomized stopping times},
which was used by Touzi and Vieille (2002).
\begin{definition}
A \emph{randomized stopping time} is an adapted
$[0,1]$-valued process $\rho = (\rho_t)_{t \in [0,T]}$ with right-continuous nondecreasing paths that satisfies $\rho_T \equiv 1$.
\end{definition}
The interpretation of a randomized stopping time is that it measures, for each $t
\in [0,T]$, the probability to stop before or at time $t$.

\subsection{On Distribution Stopping Times}

In this section we define a third concept of random stopping times,
called \emph{distribution stopping time}, and see that it is more general than the two concepts
of random stopping times defined before.

Denote by $\calM(\Omega \times [0,T])$ the set of probability measures $\delta$ on $\Omega \times [0,T]$.
Every mixed stopping time $\mu$ naturally defines a measure $\delta_\mu \in \calM(\Omega \times [0,T])$ by
\begin{eqnarray}\label{def:delta-mu}
\label{equ1}
\delta_\mu(A \times [0,t]) &:=& (P \otimes \lambda)(\{ (\omega,r) \colon \omega \in A, \mu(\omega,r) \le t\}), \ \ \ \forall t \in [0,T], \forall A \in \calF.
\label{equ2}
\end{eqnarray}
Thus, $\delta_\mu$ is the push-forward of $P\otimes \lambda$ under the map $(\omega,r)\mapsto (\omega,\mu(\omega,r))$.
Note that the marginal distribution of $\delta_\mu$ on $\Omega$ is
$P$.

Fix a measure $\delta \in \calM(\Omega \times [0,T])$
and for every $t \in [0,T]$ denote by $\delta^t$ the measure on $\Omega$ that is given by
\begin{equation}
\label{def:ep-t}
\delta^t(A):=\delta(A\times [0,t]), \ \ \ \forall A \in \calF.
\end{equation}
If the marginal distribution of $\delta$ on $\Omega$ is $P$, then the measure $\delta^t$ is absolutely continuous w.r.t.~$P$,
and therefore the Radon-Nikodym derivative of $\delta^t$ w.r.t.~$P$
exists.
\begin{lemma}\label{le:thelemma}
Let $\mu$ be a mixed stopping time and let $\delta=\delta_\mu$. In the above notation, the
Radon-Nikodym derivative of $\delta^t$ is $\calF_t$-measurable for
every $t \in [0,T]$ and is given by
\begin{equation}\label{def:f-t}
\omega \mapsto f_t(\omega) := \lambda(\{r \colon \mu(\omega,r) \le t\})
\end{equation}
\end{lemma}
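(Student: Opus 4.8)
The plan is to verify directly that the function $f_t$ defined in \eqref{def:f-t} is a bona fide version of the Radon--Nikodym derivative $\rmd\delta^t/\rmd P$ and that it is $\calF_t$-measurable. The identity $\delta^t(A)=\int_A f_t\,\rmd P$ for every $A\in\calF$ will follow from Tonelli's theorem applied to $\delta_\mu$, while the $\calF_t$-measurability of $f_t$ will come from Definition~\ref{de:mixed:b} together with careful bookkeeping of the completions involved.

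First I would settle measurability. By the equivalence of Definitions~\ref{de:mixed:a} and~\ref{de:mixed:b}, the set $E_t:=\{(\omega,r)\in\Omega\times I\colon \mu(\omega,r)\le t\}$ is $(\calF_t\otimes\calB)$-measurable, which in our convention means it lies in the completion of $\calF_t\otimes\calB$ with respect to $P\otimes\lambda$. Write $E_t=F\cup N$, where $F\in\calF_t\otimes\calB$ (the genuine product $\sigma$-algebra) and $N$ is contained in some $M\in\calF_t\otimes\calB$ with $(P\otimes\lambda)(M)=0$. Applying the classical Tonelli theorem to the genuine product, the maps $\omega\mapsto\lambda(F_\omega)$ and $\omega\mapsto\lambda(M_\omega)$ are $\calF_t$-measurable, and $\int_\Omega\lambda(M_\omega)\,\rmd P(\omega)=(P\otimes\lambda)(M)=0$, so $\lambda(M_\omega)=0$ for $P$-almost every $\omega$. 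For such $\omega$ the section $N_\omega\subseteq M_\omega$ is Lebesgue-null, hence $(E_t)_\omega=F_\omega\cup N_\omega$ is Lebesgue-measurable with $\lambda((E_t)_\omega)=\lambda(F_\omega)$. Consequently $f_t(\omega)=\lambda(\{r\colon\mu(\omega,r)\le t\})$ is well defined for $P$-almost every $\omega$ and agrees there with the $\calF_t$-measurable function $\omega\mapsto\lambda(F_\omega)$; since we work with completed $\sigma$-algebras, $f_t$ is $\calF_t$-measurable.

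Next I would identify $f_t$ as the derivative. Fix $A\in\calF$. Unwinding \eqref{def:ep-t} and the definition of $\delta_\mu$ in \eqref{def:delta-mu},
\[
\delta^t(A)=\delta_\mu(A\times[0,t])=(P\otimes\lambda)\bigl(\{(\omega,r)\colon\omega\in A,\ \mu(\omega,r)\le t\}\bigr).
\]
The set on the right is $(A\times I)\cap E_t$; integrating its indicator and using Tonelli once more, with the same decomposition $E_t=F\cup N$ to handle the completion, the right-hand side equals $\int_A\lambda(\{r\colon\mu(\omega,r)\le t\})\,\rmd P(\omega)=\int_A f_t\,\rmd P$. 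Since $f_t$ is nonnegative, bounded by $1$, $\calF_t$-measurable, and satisfies $\delta^t(A)=\int_A f_t\,\rmd P$ for every $A\in\calF$ — and $\delta^t\ll P$ because the marginal of $\delta_\mu$ on $\Omega$ is $P$ — it is a version of $\rmd\delta^t/\rmd P$; in particular this derivative admits an $\calF_t$-measurable version, as claimed.

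The hard part will be the completion bookkeeping in the measurability step: the sets $E_t$ are only guaranteed to lie in the $(P\otimes\lambda)$-completion of $\calF_t\otimes\calB$, so Tonelli's theorem cannot be invoked verbatim, and one must pass through the decomposition $E_t=F\cup N$ (or through a completed-Fubini statement of the kind already used in the proof that Definitions~\ref{de:mixed:a} and~\ref{de:mixed:b} coincide) in order to transfer measurability of the $\omega$-sections to the completion of $\calF_t$. Everything else is a routine application of Tonelli's theorem and of the defining property of the Radon--Nikodym derivative.
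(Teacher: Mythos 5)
Your proof is correct and takes essentially the same route as the paper's: an appeal to Fubini--Tonelli gives both the $\calF_t$-measurability of $f_t$ and the identity $\delta^t(A)=\delta_\mu(A\times[0,t])=\int_A f_t\,\rmd P$ for all $A\in\calF$, which identifies $f_t$ as the Radon--Nikodym derivative. The only difference is that you make explicit the completion bookkeeping (the decomposition $E_t=F\cup N$) that the paper compresses into a single citation of Fubini's theorem.
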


\begin{proof}
By definition, the
set $\{(\omega,r) \colon \mu(\omega,r) \le t\}$ is an
$(\calF_t\otimes\calB)$-measurable set, for every $t \in [0,T]$. By
Fubini's Theorem, the function in Eq.~\eqref{def:f-t}
is defined a.s.~and is $\calf_t$-measurable. Moreover, for every \revised{$t\in[0,T]$} it holds that
\begin{eqnarray}\label{eq:mixed-radon}
\delta^t(A)&=&\delta_\mu(A \times [0,t])=(P\otimes \lambda) (\{(\omega,r) \colon \omega\in A,~\mu(\omega,r) \le t\})\\
&=&\int_A \lambda(\{r : \mu(\omega,r) \le t\}) P(\rmd \omega)=\int_A f_t(\omega) P(\rmd\omega),
\end{eqnarray}
where the first equality follows from~\eqref{def:ep-t}, the second from~\eqref{def:delta-mu}, the third from Fubini's Theorem, and the fourth from~\eqref{def:f-t}.

It follows that $f_t$ is the Radon-Nikodym derivative of $\delta^t$ over $P$. Therefore this derivative is $\calf_t$-measurable.
\end{proof}
The discussion above motivates the definition of a more general class of random stopping times.
\begin{definition}
A \emph{distribution stopping time} is a probability measure $\delta \in \calM(\Omega \times [0,T])$
that satisfies the following two properties:
\begin{enumerate}
\item   The marginal distribution of $\delta$ on $\Omega$ is $P$.
\item   For every $t \in [0,T]$, the Radon-Nikodym derivative of
$\delta^t$ w.r.t.~$P$ is $\calF_t$-measurable.
\end{enumerate}
\end{definition}

If we think of the outcome of a stopping problem as a pair consisting of
(a) the state of nature $\omega$ that is chosen and
(b) the time at which one stops,
then a distribution stopping time is a probability distribution over the space of outcomes.

As we have seen, every mixed stopping time naturally defines a distribution stopping time.
It seems natural that every concept of a random stopping time will induce a probability distribution over $\Omega \times [0,T]$,
and therefore a distribution stopping time.
As we now argue, every randomized stopping time $\rho$ \revised{naturally}
defines a distribution stopping time $\delta_\rho$.
Define a probability measure $\delta_\rho \in \calM(\Omega \times [0,T])$ by
\begin{equation}\label{eq:def-delta-rho}
\delta_\rho(A \times [0,t]) := \int_A \rho_t(\omega) P(\rmd\omega), \ \ \ \forall t \in [0,T],\forall A \in \calF.
\end{equation}
Let $\delta=\delta_\rho$. By definition, the marginal distribution of $\delta$ on $\Omega$ is $P$.
For every $t \in [0,T]$,
\[ \delta^t(A) = \delta_\rho(A \times [0,t]) = \int_A \rho_t(\omega) P(\rmd\omega),\ \ \ \forall A \in \calF,\]
and therefore the Radon-Nikodym derivative of $\delta^t$
w.r.t.~$P$ is $\rho_t$, hence $\calF_t$-measurable.

\section{Equivalence between Stopping Times}
\label{section:equivalent}

Our interest in this paper is the determination of when two random stopping times are equivalent.
To this end we define in this section the equivalence relation we are interested in.
In Sections \ref{section:problems} and \ref{section:games} we explain the significance of this choice of definition.

\begin{definition}
\label{def:equivalence}
Let each of $\eta_1$ and $\eta_2$ be a mixed stopping time or a randomized stopping time.
We say that $\eta_1$ and $\eta_2$ are \emph{equivalent} if they define the same distribution stopping time:
$\eta_1 \equiv \eta_2$ if $\delta_{\eta_1} = \delta_{\eta_2}$.
\end{definition}

\begin{corollary}\label{thecorollary}
Let $\mu$ be a mixed stopping time and let $\rho$ be a randomized stopping time. Then $\mu$ and $\rho$ are equivalent if and only if
\[\rho_t(\omega)=\lambda(\{r\colon\mu(\omega,r)\le t\})\text{ a.s.}\]
for every $t \in [0,T]$.
\end{corollary}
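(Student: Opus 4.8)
The plan is to unwind Definition~\ref{def:equivalence} and compare the two distribution stopping times $\delta_\mu$ and $\delta_\rho$ directly. By definition, $\mu \equiv \rho$ means $\delta_\mu = \delta_\rho$ as measures in $\calM(\Omega\times[0,T])$. Since both measures have marginal $P$ on $\Omega$, equality is equivalent to the condition $\delta_\mu^t = \delta_\rho^t$ for every $t \in [0,T]$; indeed the sets $A \times [0,t]$ with $A \in \calF$ and $t\in[0,T]$ form a $\pi$-system generating the product $\sigma$-algebra $\calF \otimes \calB([0,T])$, so two probability measures agreeing on all of them agree everywhere. (One can even restrict $t$ to a countable dense set and invoke right-continuity, but this refinement is not needed here.)

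Next I would identify the Radon–Nikodym derivatives. By Lemma~\ref{le:thelemma}, for each $t$ the derivative of $\delta_\mu^t$ w.r.t.\ $P$ is the $\calF_t$-measurable function $\omega \mapsto f_t(\omega) = \lambda(\{r \colon \mu(\omega,r)\le t\})$. By the computation following Eq.~\eqref{eq:def-delta-rho}, the derivative of $\delta_\rho^t$ w.r.t.\ $P$ is $\rho_t$. Two measures absolutely continuous w.r.t.\ $P$ are equal if and only if their Radon–Nikodym derivatives agree $P$-a.s.; hence $\delta_\mu^t = \delta_\rho^t$ if and only if $\rho_t(\omega) = \lambda(\{r \colon \mu(\omega,r)\le t\})$ for $P$-almost every $\omega$. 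Combining this with the reduction of the previous paragraph yields exactly the asserted equivalence, quantified over all $t\in[0,T]$.

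There is no serious obstacle here — the corollary is essentially a restatement of Lemma~\ref{le:thelemma} and the paragraph computing $\delta_\rho^t$. The only point that needs a word of care is the passage between "$\delta_\mu = \delta_\rho$ as measures on $\Omega\times[0,T]$" and "$\delta_\mu^t = \delta_\rho^t$ for every $t$": one direction is the $\pi$-system/uniqueness-of-measures argument just described, and the other is immediate from the definition $\delta^t(A) = \delta(A\times[0,t])$. Everything else is the standard fact that a.e.-equal densities characterize equality of absolutely continuous measures.
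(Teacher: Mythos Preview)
Your proof is correct and follows essentially the same approach as the paper's own proof: reduce $\delta_\mu=\delta_\rho$ to equality of the slices $\delta_\mu^t=\delta_\rho^t$ for all $t$, then compare Radon--Nikodym derivatives using Lemma~\ref{le:thelemma} for $\delta_\mu^t$ and the computation after Eq.~\eqref{eq:def-delta-rho} for $\delta_\rho^t$. Your version is slightly more explicit in justifying the first reduction via the $\pi$-system argument, which the paper leaves implicit.
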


\begin{proof}
One has $\delta_\mu=\delta_\rho$ if and only if $\delta_\mu(A\times [0,t])=\delta_\rho(A\times [0,t])$ for every
$A\in \calf$ and every $t \in [0,T]$,
or, equivalently, if $\delta_\mu^t(A)=\delta_\rho^t(A)$ for every $A\in \calf$ and every $t \in [0,T]$.
By Lemma~\ref{le:thelemma} the Radon-Nykodym derivative of $\delta_\mu^t$ over $P$ is
$f_t(\omega) = \lambda(\{r \colon \mu(\omega,r) \le t\})$.
By definition of $\delta_\rho$ the Radon-Nykodim derivative of $\delta_\rho^t$ over $P$ is $\rho_t$.
It follows that $\delta_\mu=\delta_\rho$ if and only if
\[\rho_t(\omega)=\lambda(\{r\colon\mu(\omega,r)\le t\})\text{ a.s.}\]
\end{proof}

We will also say that a mixed stopping time (resp. a randomized stopping time)
is \emph{equivalent} to the distribution stopping time that it defines: $\mu \equiv \delta_\mu$
(resp. $\rho \equiv \delta_\rho$).
The next theorems state Kuhn's Theorem for these two types of random stopping times.

\begin{theorem}
\label{theorem3}
Every randomized stopping time is equivalent to some mixed stopping time.
\end{theorem}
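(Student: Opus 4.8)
The plan is to start from a randomized stopping time $\rho=(\rho_t)_{t\in[0,T]}$ and construct a mixed stopping time $\mu:\Omega\times I\to[0,T]$ by inverting the nondecreasing path $t\mapsto\rho_t(\omega)$ at each $\omega$. Concretely, for $(\omega,r)\in\Omega\times I$ set
\[
\mu(\omega,r):=\inf\{t\in[0,T]\colon \rho_t(\omega)\ge r\},
\]
with the convention that the infimum of the empty set is $T$ (this will not actually occur since $\rho_T\equiv 1$). The intuition is exactly the interpretation of $\rho$: having drawn an independent uniform threshold $r$, we stop at the first time the accumulated stopping probability reaches $r$. The distributional identity I am aiming for is, by Corollary~\ref{thecorollary}, that $\lambda(\{r\colon\mu(\omega,r)\le t\})=\rho_t(\omega)$ for a.e.\ $\omega$ and every $t$.

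First I would verify measurability and the stopping-time property. The right-continuity and monotonicity of the paths give the key pointwise equivalence
\[
\{(\omega,r)\colon \mu(\omega,r)\le t\}=\{(\omega,r)\colon r\le \rho_t(\omega)\},
\]
the nontrivial inclusion using right-continuity of $s\mapsto\rho_s(\omega)$ to pass from $\mu(\omega,r)\le t$ to $\rho_t(\omega)\ge r$. Since $\rho$ is adapted, $\{(\omega,r)\colon r\le\rho_t(\omega)\}$ is $(\calF_t\otimes\calB)$-measurable, so $\mu$ satisfies Definition~\ref{de:mixed:b} and hence is a mixed stopping time. (One should also check joint measurability of $\mu$ itself, e.g.\ from joint measurability of $(\omega,t)\mapsto\rho_t(\omega)$, which follows from adaptedness plus right-continuity of paths.) Second, from the displayed set identity and Fubini, for a.e.\ $\omega$
\[
\lambda(\{r\colon\mu(\omega,r)\le t\})=\lambda(\{r\in I\colon r\le\rho_t(\omega)\})=\rho_t(\omega),
\]
using $0\le\rho_t\le1$. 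By Corollary~\ref{thecorollary} this says $\mu\equiv\rho$, which is exactly the claim.

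The main obstacle is the measurability bookkeeping rather than the probabilistic idea: one must be careful that the generalized inverse $\mu$ is \emph{jointly} measurable on $\Omega\times I$ (not merely measurable in each coordinate), and that the set identity $\{\mu\le t\}=\{r\le\rho_t\}$ holds for \emph{every} $t$, which is where right-continuity of the paths of $\rho$ is essential — with only right-continuity of the filtration but general paths, the identity would hold only up to the $\cup_{s>t}$ adjustment, so the hypothesis that $\rho$ has right-continuous nondecreasing paths is doing real work here. I would also remark that this construction makes transparent why equivalence is many-to-one in this direction: any measure-preserving relabeling of the threshold variable $r$, or any modification of $\mu$ on an $\omega$-set of $P$-measure zero, yields another mixed stopping time equivalent to the same $\rho$.
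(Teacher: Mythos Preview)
Your proof is correct and follows essentially the same route as the paper: define $\mu(\omega,r)$ as the generalized inverse of the path $t\mapsto\rho_t(\omega)$, use right-continuity and monotonicity to obtain the set identity $\{\mu\le t\}=\{r\le\rho_t\}$, deduce $(\calF_t\otimes\calB)$-measurability from adaptedness of $\rho$, and conclude equivalence via Corollary~\ref{thecorollary}. The only cosmetic differences are that the paper writes $\min$ rather than $\inf$ (the minimum being attained for the reason you note) and does not separately discuss joint measurability of $\mu$, which is already contained in Definition~\ref{de:mixed:b}.
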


\begin{proof}
Let $\rho=(\rho_t)_{t\in [0,T]}$ be a randomized stopping
time. We will define a mixed stopping time $\mu$ using $\rho$, and
then show that the two stopping times are equivalent.
Let
$\mu:\Omega\times I\rightarrow[0,T]$ be given
by\[\mu(\omega,r):=\min\{t\in [0,T]\colon \rho_t(\omega)\ge r\}.\]
Note that the minimum is attained because $\rho$ has monotone and right-continuous paths.
The set
\begin{equation}\label{eq:set-mu-le-t}
\{(\omega,r)\colon\mu(\omega,r)\le t\}=\{(\omega,r) \colon\rho_t(\omega) \ge r\}
\end{equation}
is $(\calf_t\otimes\calb)$-measurable as the upper-graph of the $\calf_t$-measurable function $\rho_t$.
In particular, $\mu$ is a mixed
stopping time.

We now prove that $\mu$ is equivalent to
$\rho$. Indeed, from~\eqref{eq:set-mu-le-t},
\[\lambda\left(\{r \colon\mu(\omega,r)\le t\}\right)=\lambda\left(\{r\colon\rho_t(\omega)\ge r\}\right)=\rho_t(\omega)\]
for every $\omega\in\Omega$. By Corollary~\ref{thecorollary} it follows that $\mu$ and $\rho$ are equivalent.
\end{proof}


\begin{theorem}\label{theorem2}
Every distribution stopping time is equivalent to a unique (up to indistinguishability) randomized stopping time.
\end{theorem}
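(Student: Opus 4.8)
The plan is to extract the randomized stopping time from a distribution stopping time $\delta$ by using the Radon-Nikodym derivatives $f_t$ of $\delta^t$ with respect to $P$, and then to regularize them into a genuine right-continuous nondecreasing adapted process. First I would fix, for each $t \in [0,T]$, a version $f_t$ of the Radon-Nikodym derivative $\frac{d\delta^t}{dP}$, which is $\calF_t$-measurable by the definition of a distribution stopping time. Since $t \mapsto \delta^t(A) = \delta(A \times [0,t])$ is nondecreasing for every $A \in \calF$, one gets that for $s \le t$, $f_s \le f_t$ a.s., and since $\delta$ is a probability measure with $\Omega$-marginal $P$ one gets $f_T = 1$ a.s. and $0 \le f_t \le 1$ a.s. Thus the family $(f_t)$ is, up to null sets depending on the pair $(s,t)$, a nondecreasing $[0,1]$-valued process; the issue is that "a.s." for each pair is not enough to get a process with nondecreasing sample paths.

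The standard remedy is to pass to rational times: restrict to $t \in \bbq \cap [0,T]$ (adding the endpoints $0$ and $T$), where there are only countably many exceptional null sets, so on a $P$-full set $\Omega_0$ the map $q \mapsto f_q(\omega)$ is nondecreasing and $[0,1]$-valued with $f_T = 1$. Then define
\[
\rho_t(\omega) := \inf\{ f_q(\omega) \colon q \in \bbq \cap [0,T],\ q > t \}
\]
for $\omega \in \Omega_0$ (and, say, $\rho_t(\omega) := \1_{\{t = T\}}$ off $\Omega_0$). This is the right-continuous nondecreasing regularization. One then checks the three required properties: (i) the paths $t \mapsto \rho_t(\omega)$ are nondecreasing and right-continuous by construction (a standard $\varepsilon$-argument with the infimum over a dense set), and $\rho_T \equiv 1$; (ii) adaptedness: $\rho_t = \inf_{q \in \bbq, q > t} f_q$ is a countable infimum of $\calF_q$-measurable functions with $q > t$, hence $\bigcap_{s>t}\calF_s$-measurable, which equals $\calF_t$ by right continuity of the filtration — this is exactly where the usual conditions are used; (iii) equivalence to $\delta$: I must show $\rho_t$ is a version of $\frac{d\delta^t}{dP}$ for every $t$, i.e. $\int_A \rho_t\, dP = \delta^t(A)$ for all $A \in \calF$. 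For this, take $q_n \downarrow t$ with $q_n \in \bbq$; then $f_{q_n} \to \rho_t$ pointwise on $\Omega_0$ by definition of the infimum and monotonicity, and $\int_A f_{q_n}\, dP = \delta^{q_n}(A) = \delta(A \times [0,q_n]) \to \delta(A \times [0,t]) = \delta^t(A)$ by continuity of the measure $\delta$ from above along $[0,q_n] \downarrow [0,t]$; bounded convergence on the left gives $\int_A \rho_t\, dP = \delta^t(A)$. Hence $\delta_\rho = \delta$, so $\rho \equiv \delta$.

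Finally, uniqueness up to indistinguishability: if $\rho, \rho'$ are two randomized stopping times with $\delta_\rho = \delta_{\rho'} = \delta$, then for each $t$ both $\rho_t$ and $\rho'_t$ are versions of $\frac{d\delta^t}{dP}$, so $\rho_t = \rho'_t$ a.s. for each fixed $t$; applying this at all rational $t$ simultaneously (countably many null sets) and using right continuity of both processes, $\rho_t = \rho'_t$ for all $t$ on a $P$-full set, i.e. $\rho$ and $\rho'$ are indistinguishable. The main obstacle is step (iii)/(ii) done carefully: one must choose the rational-time versions first, build $\rho$ from them, and only afterwards verify that the regularized $\rho_t$ is still the correct Radon-Nikodym derivative at every real $t$ — the interchange of the infimum/limit with the integral, and the appeal to right continuity of the filtration for adaptedness, are the points that require care rather than the soft outline.
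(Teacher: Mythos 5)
Your proof is correct, but the existence half takes a genuinely different route from the paper. The paper disintegrates $\delta$ over $\Omega$ into conditional distributions $\nu_\omega$ on $[0,T]$ and simply sets $\rho_t(\omega):=\nu_\omega([0,t])$, so monotonicity and right continuity of the paths come for free as properties of a c.d.f., and the identity $\delta_\rho=\delta$ is immediate from the disintegration formula; adaptedness is absorbed into the completion convention (each $\nu_\omega([0,t])$ is a version of $\rmd\delta^t/\rmd P$, which is $\calF_t$-measurable by the definition of a distribution stopping time). You instead work directly with versions $f_t$ of the Radon--Nikodym derivatives, fix a rational skeleton, and take the right-continuous regularization $\rho_t=\inf_{q>t}f_q$; this avoids invoking the disintegration theorem but requires the regularization bookkeeping you describe (monotone limits along $q_n\downarrow t$, bounded convergence plus continuity from above of $\delta$ to see that $\rho_t$ is still a version of $\rmd\delta^t/\rmd P$), and it has the merit of making explicit exactly where the usual conditions enter (right continuity of the filtration for adaptedness, completeness for the null set $\Omega_0^c$). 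Two small points to tighten: with the index set $\{q\in\bbq\cap[0,T]\colon q>t\}$ the infimum is empty at $t=T$, so set $\rho_T:=f_T=1$ explicitly; and for adaptedness you should note that by monotonicity of $q\mapsto f_q$ the infimum can be restricted to rationals in $(t,s]$ for each $s>t$, which is what makes $\rho_t$ measurable w.r.t.\ $\calF_s$ for every $s>t$ and hence w.r.t.\ $\cap_{s>t}\calF_s=\calF_t$. Your uniqueness argument is the same as the paper's. In both approaches the final identification $\delta_\rho=\delta$ rests on agreement on the $\pi$-system of sets $A\times[0,t]$, which determines the measure.
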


\begin{proof}
For uniqueness, note that if $\rho$ and $\rho'$ are two randomized stopping times such that $\delta_\rho=\delta_{\rho'}$,
then it follows from~\eqref{eq:def-delta-rho} that $\int_A \rho_t(\omega)\rmd P(\omega)=\int_A \rho'_t(\omega)\rmd P(\omega)$ for every $t\in [0,T]$
and every $A\in\calf$.
Therefore, $\rho_t(\cdot)=\rho'_t(\cdot)$ a.s.~for every $t \in [0,T]$.
It follows that, almost surely, $\rho_q(\omega)=\rho'_q(\omega)$ for every $q\in \bbq\cap [0,T]$.
Because $\rho$ and $\rho'$ have right continuous paths it follows that almost surely $\rho_t(\omega)=\rho'_t(\omega)$ for every $t \in [0,T]$, as desired.

For existence, fix a distribution stopping time $\delta$.
Let $\nu:\Omega\rightarrow \Delta([0,T])$ be the disintegration of $\delta$ over $\Omega$, so that
\begin{equation}\label{eq:disi}\E_\delta [R] = \int\left(\int R(\omega,t)\nu_\omega(\rmd t)\right)~P(\rmd\omega)
\end{equation}
for every bounded Borel function $R\colon\Omega\times [0,T]\rightarrow\bfr$.
Set $\rho_t(\omega):=\nu_\omega([0,t])$ for every $t \in [0,T]$ and every $\omega \in \Omega$,
so that for every $\omega\in \Omega$ the function \revised{$t \mapsto \rho_t(\omega)$} is the c.d.f.~
of $\nu_\omega$, and, in particular, nondecreasing and right continuous.
It remains to show that $\delta_\rho=\delta$. And indeed, for every $t\in [0,T]$, we have
\begin{equation}\label{eq:thm-q}
\delta(A\times [0,t])=
\E_\delta[\mathbf{1}_{A \times [0,t]}]=
\int_A\rho_t(\omega) P(\rmd\omega)=\delta_\rho(A\times [0,t])
\end{equation}
where the second equality follows from~\eqref{eq:disi} and the third from~(\ref{eq:def-delta-rho}).
\end{proof}

As a corollary we deduce
that
all types of random stopping times are equivalent.
\begin{corollary}
Suppose that the underlying probability space $(\Omega,\calF,P)$
satisfies the usual conditions.
Then every distribution stopping time is equivalent to a unique (up to indistinguishability)
randomized stopping time and to some mixed stopping time.
\end{corollary}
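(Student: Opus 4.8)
The plan is to obtain this corollary as an immediate consequence of Theorems~\ref{theorem3} and~\ref{theorem2}, once transitivity of the relation $\equiv$ is recorded. First I would note that, by Definition~\ref{def:equivalence} together with the convention introduced in the paragraph after Corollary~\ref{thecorollary} (a mixed or randomized stopping time is equivalent to the distribution stopping time it induces), two random stopping times of any of the three types are equivalent exactly when they induce the same element of $\calM(\Omega\times[0,T])$. Since "inducing the same measure in $\calM(\Omega\times[0,T])$" is patently reflexive, symmetric and transitive, the relation $\equiv$ is an equivalence relation across all three classes, and in particular equivalences chain.

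Next, fix a distribution stopping time $\delta$. By Theorem~\ref{theorem2} there is a randomized stopping time $\rho$ with $\delta_\rho=\delta$, and $\rho$ is unique up to indistinguishability among randomized stopping times; this gives the "unique randomized stopping time" half of the statement. Applying Theorem~\ref{theorem3} to $\rho$ yields a mixed stopping time $\mu$ with $\delta_\mu=\delta_\rho$, hence $\delta_\mu=\delta$ by transitivity, i.e.\ $\mu\equiv\delta$. This gives the "some mixed stopping time" half. That $\mu$ need not be unique is expected and not claimed: the construction in the proof of Theorem~\ref{theorem3} makes a specific choice of the map $(\omega,r)\mapsto\mu(\omega,r)$, and reparametrizing the auxiliary randomization variable $r$ produces other mixed stopping times inducing the same $\delta$.

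The proof is therefore essentially bookkeeping, and the only substantive point — where any hypothesis on $(\Omega,\calF,P)$ is genuinely used — is internal to the existence half of Theorem~\ref{theorem2}: the disintegration $\nu\colon\Omega\to\Delta([0,T])$ of $\delta$ over $\Omega$ in~\eqref{eq:disi} must exist. So the one thing I would verify carefully is that the assumed regularity of $(\Omega,\calF,P)$, together with the fact that $[0,T]$ is Polish, guarantees the existence of such a regular conditional distribution, and that the resulting $\rho_t(\omega):=\nu_\omega([0,t])$ is jointly measurable and adapted — adaptedness because, by~\eqref{eq:thm-q}, $\rho_t$ agrees $P$-a.s.\ with the $\calF_t$-measurable Radon–Nikodym derivative $\rmd\delta^t/\rmd P$ supplied by the second clause in the definition of a distribution stopping time, and $\sigma$-algebras are taken completed. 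Modulo this verification, the corollary follows at once by chaining Theorems~\ref{theorem2} and~\ref{theorem3}.
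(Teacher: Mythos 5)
Your proposal is correct and matches the paper's (implicit) argument: the corollary is deduced exactly by chaining Theorem~\ref{theorem2} (unique equivalent randomized stopping time) with Theorem~\ref{theorem3} (equivalent mixed stopping time), transitivity being automatic since equivalence means inducing the same distribution stopping time. Your added remark that the hypothesis on $(\Omega,\calF,P)$ is only needed for the disintegration in the existence half of Theorem~\ref{theorem2} is accurate and consistent with the paper.
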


\begin{remark}
By the definition of equivalence, every equivalence class of stopping times contains a unique distribution stopping time.
By Theorem~\ref{theorem2} this is also true for randomized stopping times.
However, there may be different mixed stopping times that are equivalent.
Indeed, this happens already in the discrete-time framework when $\Omega$ is finite.
Suppose, for example, that $\Omega = \{\omega_1,\omega_2\}$,
\revised{$\calF_0 = 2^\Omega$, and $P$ is the uniform distribution over $\Omega$.}
Consider the distribution stopping time $\delta$ that is the uniform distribution over $\Omega \times \{0,1\}$;
that is, at each state one stops with equal probabilities at times $t=0$ and $t=1$.
There are infinitely many different mixed stopping times that are equivalent to $\delta$, two of them are:
\[ \mu(\omega_i,r) = \left\{
\begin{array}{lll}
0 & \ \ \ \ \ & r \leq \frac{1}{2},\\
1 & & r > \frac{1}{2},
\end{array}
\right.
\ \ \ \ \
\widetilde\mu(\omega_i,r) = \left\{
\begin{array}{lll}
0 & \ \ \ \ \ & i=1, r \leq \frac{1}{2},\\
1 & & i=1, r > \frac{1}{2},\\
0 & \ \ \ \ \ & i=2, r > \frac{1}{2},\\
1 & & i=2, r \leq \frac{1}{2},
\end{array}
\right.
\]
\end{remark}

\section{Equivalence and Stopping Problems}
\label{section:problems}

In this section we provide one motivation to Definition \ref{def:equivalence} of the equivalence relation between random stopping times,
by showing that equivalent random stopping time induce the same payoff in all stopping problems.

 A \emph{stopping problem} (for the filtered probability space
$(\Omega,\calF,P,(\calF_t)_{t \in [0,T]})$)
is given by a bounded%
\footnote{Our results hold for a larger class of payoff processes, namely, the class $%
\mathcal{D}$ that was defined by Dellacherie and Meyer, 1975, \S II-18. This
class contains in particular integrable processes.}
 process $R = (R_t)_{t \in [0,T]}$.
That is, the function $(\omega,t) \mapsto R_t(\omega)$ is $(\calF \times \calB)$-measurable.
For every pure stopping time $\sigma$ denote $R_\sigma(\omega):=R_{\sigma(\omega)}(\omega)$. Then $R_\sigma$ is a random variable (that is, a measurable function).
The payoff that corresponds to the pure stopping time $\sigma$ is
\[ \gamma(R;\sigma) := \E_P[R_\sigma]. \]

Because a mixed stopping time chooses a \revised{pure} stopping time randomly,
the payoff that corresponds to a mixed stopping time $\mu$ is
\begin{equation}\label{eq:gamma-mu} \gamma(R;\mu) := \E_{P \otimes \lambda}[R_{\mu(\omega,r)}(\omega)]. \end{equation}

The payoff that corresponds to a randomized stopping time $\rho$ is (See Touzi and Vieille, 2002)
\begin{equation}
\label{eq:gamma-rho} \gamma(R;\rho) := \E_P\left[\int_0^T \revised{R_t(\omega) \rmd\rho_t(\omega)}\right].
\end{equation}
In the right-hand side the integral is over $t$ and the expectation is over $\omega$.
The payoff that corresponds to a distribution stopping time $\delta$ is
\[ \gamma(R;\delta) := \E_{\delta}[R_t(\omega)]. \]

The next theorem, which follows from the definitions, states that equivalent random stopping times induce the same payoff in all stopping problems.
\begin{theorem}
\label{theorem:1}
Suppose that the filtration satisfies the usual conditions
and fix a stopping problem $R$.
Then $\gamma(R;\mu) = \gamma(R;\rho)$ whenever $\mu$ and $\rho$ are equivalent.
Moreover,
$\gamma(R;\mu) = \gamma(R;\delta_\mu)$ and $\gamma(R;\rho) = \gamma(R;\delta_\rho)$
for every mixed stopping time $\mu$ and every randomized stopping time $\rho$.
\end{theorem}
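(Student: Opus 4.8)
The plan is to deduce the first assertion from the two identities in the ``moreover'' clause: once we know that $\gamma(R;\mu)=\gamma(R;\delta_\mu)$ and $\gamma(R;\rho)=\gamma(R;\delta_\rho)$ for every mixed stopping time $\mu$ and every randomized stopping time $\rho$, then whenever $\mu\equiv\rho$ we have $\delta_\mu=\delta_\rho$ by Definition~\ref{def:equivalence}, and since $\gamma(R;\delta)=\E_\delta[g]$ depends only on the measure $\delta$, chaining the equalities yields $\gamma(R;\mu)=\gamma(R;\delta_\mu)=\gamma(R;\delta_\rho)=\gamma(R;\rho)$. So it suffices to prove the two ``moreover'' identities, and each of them is essentially a change-of-variables computation. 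Throughout I write $g(\omega,t):=R_t(\omega)$, which is a bounded $(\calF\otimes\calB)$-measurable function on $\Omega\times[0,T]$ by the definition of a stopping problem.

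For the mixed case, recall from the discussion preceding the definition of distribution stopping time that $\delta_\mu$ is the push-forward of $P\otimes\lambda$ under the jointly measurable map $\Phi\colon(\omega,r)\mapsto(\omega,\mu(\omega,r))$ from $(\Omega\times I,\calF\otimes\calB)$ to $(\Omega\times[0,T],\calF\otimes\calB)$. The change-of-variables formula for push-forward measures then gives
\[\gamma(R;\delta_\mu)=\E_{\delta_\mu}[g]=\E_{P\otimes\lambda}[g\circ\Phi]=\E_{P\otimes\lambda}[R_{\mu(\omega,r)}(\omega)]=\gamma(R;\mu),\]
the last equality being the definition~\eqref{eq:gamma-mu} of $\gamma(R;\mu)$; boundedness of $R$ is what makes all the expectations finite.

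For the randomized case, first note that $\delta_\rho$ is a distribution stopping time, as verified right after~\eqref{eq:def-delta-rho}. Apply Theorem~\ref{theorem2} to $\delta_\rho$: its disintegration $\nu=(\nu_\omega)_{\omega\in\Omega}$ over $\Omega$ produces a randomized stopping time $\rho'$ with $\rho'_t(\omega)=\nu_\omega([0,t])$ and $\delta_{\rho'}=\delta_\rho$, and by the uniqueness clause of Theorem~\ref{theorem2} the processes $\rho'$ and $\rho$ are indistinguishable. Hence, for $P$-almost every $\omega$, the probability measure $\nu_\omega$ on $[0,T]$ is exactly the Lebesgue--Stieltjes measure $\rmd\rho_\cdot(\omega)$ of the nondecreasing right-continuous path $t\mapsto\rho_t(\omega)$. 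Feeding $g$ into the disintegration identity~\eqref{eq:disi} then gives
\[\gamma(R;\delta_\rho)=\E_{\delta_\rho}[g]=\int\!\left(\int_0^T R_t(\omega)\,\nu_\omega(\rmd t)\right)P(\rmd\omega)=\E_P\!\left[\int_0^T R_t(\omega)\,\rmd\rho_t(\omega)\right]=\gamma(R;\rho),\]
using the definition~\eqref{eq:gamma-rho} of $\gamma(R;\rho)$ in the last step.

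I expect the only genuinely delicate point to be the identification, in the randomized case, of the disintegration kernel of $\delta_\rho$ with the Stieltjes measures of the paths of $\rho$; routing this through Theorem~\ref{theorem2} and its uniqueness statement avoids redoing a $\pi$--$\lambda$/monotone-class argument, but one must be careful with the endpoint convention, namely that $\nu_\omega(\{0\})=\rho_0(\omega)$, so that the symbol $\int_0^T$ in~\eqref{eq:gamma-rho} denotes integration over the closed interval $[0,T]$ and matches the convention of Touzi and Vieille (2002). Everything else is bookkeeping: joint measurability of $\Phi$ (hence of $g\circ\Phi$) comes from the definition of a mixed stopping time, joint measurability of $g$ from the definition of a stopping problem, and boundedness of $R$ is used only to make the integrals converge and to license the change of variables and Fubini manipulations; as the footnote observes, it can be relaxed to the class $\mathcal{D}$.
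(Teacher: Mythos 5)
Your proposal is correct and takes essentially the same route as the paper's proof: the push-forward change of variables for $\gamma(R;\mu)=\gamma(R;\delta_\mu)$, and for the randomized case the identification of the disintegration kernel of $\delta_\rho$ with the Lebesgue--Stieltjes measures of the paths of $\rho$ via Theorem~\ref{theorem2}, followed by the disintegration identity~\eqref{eq:disi}. The only difference is cosmetic: you make the uniqueness/indistinguishability step (and the endpoint convention at $t=0$) explicit, whereas the paper simply cites the proof of Theorem~\ref{theorem2}.
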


\begin{proof}
Fix a mixed stopping time $\mu$.
Because $\delta_\mu$ is the push-forward of $P\otimes \lambda$ under the map $(\omega,r)\mapsto (\omega,\mu(\omega,r))$ it follows that
\[\gamma(R;\delta_\mu)=\E_{\delta_\mu}[R_t(\omega)]=\E_{P\otimes\lambda}[R_{\mu(\omega,r)}(\omega)]=\gamma(R;\mu)\]

Now let $\delta$ be a distribution stopping time and let $\rho$ be the unique randomized stopping time such that $\delta=\delta_\rho$.
It follows from the proof of Theorem~\ref{theorem2} that the function
$t \mapsto \rho_t(\omega)$ is the c.d.f.\ of the conditional distributions $\nu_\omega$ of $\delta$ given $\omega$. Therefore from~\eqref{eq:disi}
\[\gamma(R;\delta)=\E_\delta [R_t(\omega)]=
\E_P \left[\revised{\int_0^T R_t(\omega)\rmd\rho_t(\omega)}\right] = \gamma(R;\rho),\]
as desired.
\end{proof}

\section{Game Equivalence}
\label{section:games}

In this section we show that the result of Section \ref{section:problems} carries to multi-player stopping games in continuous time.
In multi-player stopping games, each player has to decide when to stop,
and the terminal payoff depends on the time in which the first player decides to stop,
as well as on the set of players who decide to stop at that time.
To simplify notations we consider only two-player zero-sum games.
The result, however, holds with the same proof for any number of players and for non-zero-sum games as well.

\begin{definition}
A two-player zero-sum \emph{stopping game} in continuous time is given by a
stopping problem form $(\Omega,\calF,P,(\calF_t)_{t \in
[0,T]})$ and by three payoff processes $X = (X_t)_{t
\in [0,T]}$,  $Y = (Y_t)_{t \in [0,T]}$, and  $Z = (Z_t)_{t \in
[0,T]}$.
\end{definition}

The process $X$ (resp.~$Y$, $Z$) dictates the payoff if Player~1 stops before (resp.~after, together with) Player~2,
the goal of Player~1 is to maximize the expected payoff,
and the goal of Player~2 is to minimize this quantity.

We first show how, given a stopping time for Player 2, the game reduces to a single player problem from the perspective of Player~1.
Consider an equivalent class of stopping times for Player~2 and let $\delta_2$ be the unique distribution stopping time in this class.
Assume that Player~2 uses this stopping time
\revised{and denote by $\calb_T$ the Borel $\sigma$-algebra over $[0,T]$}. Let $\tilde\Omega:=\Omega\times [0,T]$.
The stopping problem faced by Player~1 is given by
(a) the probability space $(\tilde\Omega,\calf\otimes\revised{\calb_T},P\otimes\delta_2)$,
(b) the filtration $(\tilde\calf_t)_{t \in [0,T]}$ is given by
$\tilde\calf_t = \{A\times [0,T]\colon A\in\calf_t\}$ for every $t \in [0,T]$, and
(c) the payoff process is
\begin{equation}\label{eq:def-r}R_t(\omega,s) = X_t(\omega)\one_{t<s}+Y_t(\omega)\one_{t>s}+Z_t(\omega)\one_{t=s}.
\end{equation}

Any stopping time of Player~1 that is defined on $(\Omega,\calF,P,(\calF_t)_{t \in
[0,T]})$ can be lifted to a stopping time on $(\tilde\Omega,\calF\otimes\revised{\calb_T},P\otimes\delta_2,\tilde\calf_t)$ as follows.
If $\mu:\Omega\times I\rightarrow [0,\infty)$ is a mixed stopping time,
then $\tilde\mu:\revised{\tilde\Omega}\times I\rightarrow [0,\infty)$
is given by $\tilde\mu(\omega,s,r):=\mu(\omega,r)$.
If $\rho_t$ is a randomized stopping time, then $\tilde\rho_t(\omega,s):=\rho_t(\omega)$.
The following proposition states that this definition of lifting is compatible with our notion of equivalence between stopping times:
\begin{proposition}Let $\mu$ be a mixed stopping time and let $\rho$ be a randomized stopping time on $(\Omega,\calf_t)$.
If $\rho$ and $\mu$ are equivalent, then $\tilde\rho$ and $\tilde\mu$ are equivalent as random stopping times on $(\tilde\Omega,\tilde\calf_t)$.
\end{proposition}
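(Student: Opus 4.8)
The plan is to reduce the claim to Corollary~\ref{thecorollary} on the lifted space, once one has checked that $\tilde\mu$ and $\tilde\rho$ really are a mixed stopping time and a randomized stopping time on $(\tilde\Omega,(\tilde\calf_t)_{t\in[0,T]})$. The single structural fact that does all the work is that the coordinate projection $\pi\colon\tilde\Omega\to\Omega$, $(\omega,s)\mapsto\omega$, satisfies $\tilde\calf_t=\pi^{-1}(\calf_t)$ for every $t\in[0,T]$, and that the marginal of $P\otimes\delta_2$ on the first coordinate is $P$ (for $\delta_2$ itself this is the first defining property of a distribution stopping time). Consequently every $\calf_t$-measurable object on $\Omega$ pulls back under $\pi$ to a $\tilde\calf_t$-measurable object on $\tilde\Omega$, every $P$-null subset of $\Omega$ lifts to a null subset of $\tilde\Omega$, and $(\tilde\calf_t)_t$ inherits right-continuity from $(\calf_t)_t$ (since $\pi^{-1}$ commutes with countable intersections of $\sigma$-algebras when $\pi$ is onto).

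First I would verify that the lifts are legitimate. For $\tilde\rho$ this is immediate: $\tilde\rho_t=\rho_t\circ\pi$ is $\tilde\calf_t$-measurable, and monotonicity in $t$, right-continuity of the paths, and $\tilde\rho_T\equiv1$ are inherited pointwise from $\rho$. For $\tilde\mu$ I would use Definition~\ref{de:mixed:b} together with the identity
\[\{(\omega,s,r)\in\tilde\Omega\times I\colon\tilde\mu(\omega,s,r)\le t\}=(\pi\times\mathrm{id}_I)^{-1}\bigl(\{(\omega,r)\in\Omega\times I\colon\mu(\omega,r)\le t\}\bigr);\]
the set on the right is $(\calf_t\otimes\calb)$-measurable because $\mu$ is a mixed stopping time, so the set on the left is $(\tilde\calf_t\otimes\calb)$-measurable, which is exactly what Definition~\ref{de:mixed:b} demands of $\tilde\mu$.

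Now I would apply Corollary~\ref{thecorollary} on $(\tilde\Omega,(\tilde\calf_t)_t)$: $\tilde\mu$ and $\tilde\rho$ are equivalent if and only if $\tilde\rho_t(\omega,s)=\lambda(\{r\colon\tilde\mu(\omega,s,r)\le t\})$ holds $(P\otimes\delta_2)$-almost surely, for every $t\in[0,T]$. By construction neither side depends on $s$: the left-hand side is $\rho_t(\omega)$ and the right-hand side is $\lambda(\{r\colon\mu(\omega,r)\le t\})$. Since $\mu$ and $\rho$ are equivalent, Corollary~\ref{thecorollary} on the original space $(\Omega,(\calf_t)_t)$ gives, for each fixed $t$, that $\rho_t(\omega)=\lambda(\{r\colon\mu(\omega,r)\le t\})$ for $P$-almost every $\omega$; hence the exceptional set on $\tilde\Omega$ is of the form $N_t\times[0,T]$ with $P(N_t)=0$, so it is $(P\otimes\delta_2)$-null by the marginal property. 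This is exactly the condition characterizing equivalence, so $\tilde\mu$ and $\tilde\rho$ are equivalent.

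Since the argument only unwinds definitions, I do not expect a genuine obstacle; the delicate points are purely bookkeeping ones. The first is establishing $\tilde\calf_t=\pi^{-1}(\calf_t)$ precisely enough that measurability transfers through the lift, including through the completions, which cause no trouble because every set involved is a cylinder over a set on $\Omega$ or on $\Omega\times I$. The second is respecting the quantifier order in Corollary~\ref{thecorollary} --- the characterization holds ``for each $t$, almost surely,'' not ``almost surely, for all $t$,'' which is exactly the form produced on $\Omega$ and consumed on $\tilde\Omega$. The third is checking that the measure on $\tilde\Omega$ has $\Omega$-marginal $P$, so that $P$-null sets indeed lift to null sets.
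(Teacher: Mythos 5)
Your proof is correct and follows essentially the same route as the paper's: both reduce the claim to Corollary~\ref{thecorollary}, applied once on $(\Omega,(\calF_t)_t)$ to get $\rho_t(\omega)=\lambda(\{r\colon\mu(\omega,r)\le t\})$ and once on $(\tilde\Omega,(\tilde\calF_t)_t)$ after noting that neither side of the lifted identity depends on $s$. Your added bookkeeping (checking that $\tilde\mu$ and $\tilde\rho$ are legitimate stopping times on the lifted space and that the $P$-null exceptional sets remain null there) only makes explicit what the paper leaves implicit.
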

\begin{proof}
For every $(\omega,s)\in \tilde\Omega$ and every \revised{$t\in [0,T]$ we have}
\[\lambda({r\colon\tilde\mu(\omega,s,r)\le t})=\lambda({r\colon\mu(\omega,r)\le t})=\rho_t(\omega)=\tilde\rho_t(\omega,s)\]
where the first equality follows from the definition of $\tilde\mu$, the second from Corollary~\ref{thecorollary},
and the third from the definition of $\tilde\rho$. Therefore, by Corollary~\ref{thecorollary}, $\tilde\mu$ and $\tilde\rho$ are equivalent.
\end{proof}

When Player~1 plays some random stopping time $\tau$ (mixed, randomized, or distribution) the payoff in the game is defined by
\begin{equation}\label{eq:gamma-game}\gamma(X,Y,Z;\tau,\delta_2):=\gamma(R;\tilde\tau),\end{equation}
where on the right-hand side we have the payoff for a single player decision problem on $\tilde\Omega$ with the payoff process $R$ given by~\eqref{eq:def-r}.

With these definitions of the payoff functions Theorem~\ref{theorem:1}
extends from stopping problems to games.
Indeed, by definition,
\revised{the payoff $\gamma(X,Y,Z;\tau,\delta_2)$ depends on Player~2's random stopping time only through
the equivalence class it belongs to.
In particular, the payoff is the same for equivalent random stopping times of Player~2.}
By Theorem~\ref{theorem:1}, we get the same payoff for equivalent random stopping times of Player~1.

The definition of the payoff \revised{in (\ref{eq:gamma-game}) is unsatisfactory because it is}
asymmetric and assumes the perspective of Player~1.
\revised{The following result gets rid of this asymmetry.}
\begin{proposition}\label{game-mus}
\revised{For every pair of mixed stopping times $\mu^1$ and $\mu^2$,}
\begin{eqnarray}
\label{eq:payoff symmetric}
&\gamma(X,Y,Z;\mu^1,\mu^2)=\\&\E_{P \otimes\lambda\otimes\lambda}[
X_{\mu^1(\omega,r^1)}\mathbf{1}_{\{\mu^1(\omega,r^1) < \mu^2(\omega,r^2)\}}
+
Y_{\mu^2(\omega,r^2)}\mathbf{1}_{\{\mu^1(\omega,r^1) > \mu^2(\omega,r^2)\}}
+
Z_{\mu^1(\omega,r^1)}\mathbf{1}_{\{\mu^1(\omega,r^1) = \mu^2(\omega,r^2)\}}].
\nonumber
\end{eqnarray}
\end{proposition}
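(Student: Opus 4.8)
The plan is to unwind the game payoff~\eqref{eq:gamma-game} into the single‑player decision problem on $\tilde\Omega=\Omega\times[0,T]$ and then perform a change of variables that replaces Player~2's auxiliary coordinate $s$ by $\mu^2(\omega,r^2)$. First I would record the routine admissibility checks for the lifted data. The lift $\tilde\mu^1(\omega,s,r):=\mu^1(\omega,r)$ is a mixed stopping time on $(\tilde\Omega,(\tilde\calf_t)_{t\in[0,T]})$ because
\[\{(\omega,s,r)\colon\tilde\mu^1(\omega,s,r)\le t\}=\{(\omega,r)\colon\mu^1(\omega,r)\le t\}\times[0,T]\]
is $(\tilde\calf_t\otimes\calb)$-measurable, using $\tilde\calf_t=\{A\times[0,T]\colon A\in\calf_t\}$; and $R$ of~\eqref{eq:def-r} is a bounded, jointly measurable payoff process on $\tilde\Omega$ since $X,Y,Z$ are jointly measurable and bounded and the three indicators are Borel in $(s,t)$. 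Hence~\eqref{eq:gamma-mu} is applicable on $\tilde\Omega$, and, reading the measure ``$P\otimes\delta_2$'' on $\tilde\Omega$ as the distribution stopping time $\delta_2=\delta_{\mu^2}$ itself (whose $\Omega$-marginal is $P$),
\[\gamma(X,Y,Z;\mu^1,\mu^2)=\gamma(R;\tilde\mu^1)=\E_{\delta_{\mu^2}\otimes\lambda}\bigl[R_{\tilde\mu^1(\omega,s,r^1)}(\omega,s)\bigr]=\E_{\delta_{\mu^2}\otimes\lambda}\bigl[R_{\mu^1(\omega,r^1)}(\omega,s)\bigr],\]
where $\lambda$ here is the Lebesgue measure in Player~1's randomizing variable $r^1$.

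The heart of the argument is the change of variables. As observed right after~\eqref{equ2}, $\delta_{\mu^2}$ is the push-forward of $P\otimes\lambda$ under $(\omega,r^2)\mapsto(\omega,\mu^2(\omega,r^2))$; this can also be recovered from Lemma~\ref{le:thelemma} and Corollary~\ref{thecorollary} together with the almost-sure uniqueness of the disintegration used in the proof of Theorem~\ref{theorem2}. Since pushing a product measure forward by a map acting as the identity on the last coordinate produces the product of the push-forwards, $\delta_{\mu^2}\otimes\lambda$ on $\Omega\times[0,T]\times I$ is the push-forward of $P\otimes\lambda\otimes\lambda$ under $(\omega,r^2,r^1)\mapsto(\omega,\mu^2(\omega,r^2),r^1)$. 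Applying the change-of-variables formula to the bounded measurable function $(\omega,s,r^1)\mapsto R_{\mu^1(\omega,r^1)}(\omega,s)$ (after trivially reordering the factors of the product measure to match the statement) gives
\[\gamma(X,Y,Z;\mu^1,\mu^2)=\E_{P\otimes\lambda\otimes\lambda}\bigl[R_{\mu^1(\omega,r^1)}(\omega,\mu^2(\omega,r^2))\bigr].\]

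To finish, I would expand $R$ by its definition~\eqref{eq:def-r} with $t=\mu^1(\omega,r^1)$ and $s=\mu^2(\omega,r^2)$: the indicators $\one_{t<s}$, $\one_{t>s}$, $\one_{t=s}$ turn into the events $\{\mu^1(\omega,r^1)<\mu^2(\omega,r^2)\}$, $\{\mu^1(\omega,r^1)>\mu^2(\omega,r^2)\}$, $\{\mu^1(\omega,r^1)=\mu^2(\omega,r^2)\}$, and the integrand becomes exactly the bracketed expression on the right-hand side of~\eqref{eq:payoff symmetric}. I do not expect a genuine obstacle here: the only delicate points are (i) confirming that the lifted payoff process and lifted stopping time are admissible so that~\eqref{eq:gamma-mu} may legitimately be invoked on $\tilde\Omega$, and (ii) carrying out the push-forward and Fubini bookkeeping carefully in the change-of-variables step, making sure the identification of $\delta_{\mu^2}$ as a push-forward of $P\otimes\lambda$ is what is used rather than some spurious product structure on $\tilde\Omega$.
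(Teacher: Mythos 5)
Your proof is correct and takes essentially the same route as the paper: reduce the game payoff to the lifted single-player problem on $\tilde\Omega$, replace the coordinate $s$ by $\mu^2(\omega,r^2)$ using the fact that $\delta_{\mu^2}$ is the push-forward of $P\otimes\lambda$ under $(\omega,r^2)\mapsto(\omega,\mu^2(\omega,r^2))$, and then expand $R$ via \eqref{eq:def-r}. The paper words the middle equality as an application of the single-player equivalence (Theorem~\ref{theorem:1}, from Player~2's perspective, for each fixed $r^1$), but that is exactly the push-forward/change-of-variables computation you perform directly on the triple product.
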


\revised{The form in Eq.~(\ref{eq:payoff symmetric}) is the one that is usually used in the study of stopping games.}

\begin{proof}
Let \revised{$\delta=\delta_{\mu^2}$ be the distribution stopping time equivalent to} $\mu^2$. Then
\revised{
\begin{align*}
&\gamma(X,Y,Z;\mu^1,\mu^2)\\
&\ =E_{\delta\otimes\lambda}[R_{\tilde\mu^1(\omega,s,r^1)}(\omega,s)]\\
&\ =E_{\delta\otimes\lambda}[R_{\mu^1(\omega,r^1)}(\omega,s)]\\
&\ =E_{P\otimes\lambda\otimes\lambda}[R_{\mu^1(\omega,r^1)}(\omega,\mu^2(\omega,r^2))]\\
&\ =E_{P \otimes\lambda\otimes\lambda}[
X_{\mu^1(\omega,r^1)}\mathbf{1}_{\mu^1(\omega,r^1) < \mu^2(\omega,r^2)}+
Y_{\mu^2(\omega,r^2)}\mathbf{1}_{\mu^1(\omega,r^1) > \mu^2(\omega,r^2)}+
Z_{\mu^1(\omega,r^1)}\mathbf{1}_{\mu^1(\omega,r^1) = \mu^2(\omega,r^2)}],
\end{align*}}
where the first equality follows from~\eqref{eq:gamma-game} and~\eqref{eq:gamma-mu},
the second from the definition of
$\tilde\mu^1$, the third from the fact that
\[E_\delta[R_{\tilde\mu^1(\omega,s,r^1)}(\omega,s)]
=E_{P\otimes\lambda}[R_{\mu^1(\omega,r^1)}(\omega,\mu^2(\omega,r^2))]\]
for
every \revised{$r^1\in [0,1]$} because the left and right hand side are,
respectively,  the payoffs of Player~2 in a single player decision
problem with payoff $R_{\tilde\mu^1(\omega,s,r^1)}$ when playing
$\delta$ and $\mu^2$, and the fourth from~\eqref{eq:def-r}.
\end{proof}
From the symmetry in the expression for the payoff
in Proposition~\ref{eq:def-r} it follows that if we defined the
payoff by reducing the two-player game to the decision problem from
Player~2's perspective we would have obtained the same payoff.

\end{document}